\newcommand{\Co}{\mathbb{C}}
\newcommand{\GLn}{\mathrm{GL}_n(\mathbb{C})}
\newcommand{\im}{\mathrm{im} \,}
\newtheorem{theorem}{Theorem}[section]
\newtheorem{predl}[theorem]{Proposition}
\newtheorem{definition}[theorem]{Definition}
\newtheorem{lemma}[theorem]{Lemma}
\newtheorem{statement}[theorem]{Lemma}
\newtheorem{corollary}[theorem]{Corollary}
\begin{document}
\begin{center}
{\LARGE \textbf{Automorphisms of Hopf manifolds}} \\[5pt]

{\Large Anna Savelyeva}
\end{center}

\begin{abstract}
We prove that the automorphism groups of Hopf manifolds are Jordan.
\end{abstract}

\section{Introduction}
We are interested in the automorphism groups of complex manifolds, but they are often very complicated and there is no general approach to them. However we can try to consider only their finite subgroups. The finite groups are easier to study if they contain abelian subgroups of small indices, as the structure of finite abelian groups is well known. That is where the next definition comes from.

\begin{definition}[{see \cite[Definition 2.1]{Jordan}}] \label{Jordan}
The group $G$ is called Jordan if there exists a constant $C$, such that for any finite subgroup $H \subset G$ there exists an abelian subgroup of $H$ whose index in $H$ is less than $C$.
\end{definition}

Note that Definition $\ref{Jordan}$ is not exactly the same as in \cite{Jordan}. Unlike \cite{Jordan} we do not require the abelian subgroup to be normal. However these definitions are equivalent, because if an abelian subgroup of index $k$ exists, then it is easy to construct a normal abelian subgroup of index at most $k^k$. This bound can be easily improved to $k!$ (see \cite[Corollary 1.2]{k^2}), and, with more effort, to $k^2$ (see \cite[Theorem 1.41]{k^2}). \\[-5pt]

Camille Jordan has proved the following result:

\begin{theorem}[{see \cite[Theorem 36.13]{GLn}}] \label{GLn}
The group $\GLn$ is Jordan.
\end{theorem}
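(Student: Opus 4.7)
The plan is to use the classical route via the unitary group. First I would reduce to the compact case: given a finite subgroup $H \subset \GLn$, average any Hermitian inner product over $H$ to produce an $H$-invariant Hermitian form, so that after conjugation by the appropriate element of $\GLn$ we may assume $H \subset U(n)$. This replaces the noncompact ambient group by the compact Lie group $U(n)$, on which we have an operator-norm metric, Haar measure, and the usual ``small neighborhoods shrink under commutators'' phenomenon.

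Next I would exploit a quantitative commutator estimate near the identity. Using the operator norm on $U(n)$, there exists an explicit neighborhood $V$ of the identity (say, the set of $g \in U(n)$ with $\|g - \mathrm{Id}\| < 1/2$) such that for $x, y \in V$ one has $\|[x,y] - \mathrm{Id}\| < \|x - \mathrm{Id}\|$. Let $A \subset H$ be the subgroup generated by $H \cap V$. If two elements $x, y$ of $H \cap V$ did not commute, then $[x,y]$ would be a nontrivial element of $H$ strictly closer to $\mathrm{Id}$ than both $x$ and $y$; iterating produces an infinite descending sequence of distinct elements of the finite group $H$, a contradiction. Hence $A$ is generated by pairwise commuting elements and is therefore abelian.

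Finally I would bound the index $[H : A]$ by a covering argument. Since $U(n)$ is compact, it can be covered by finitely many translates of $V$; call this number $C_n$, which depends only on $n$. Any two elements of $H$ lying in the same translate $gV$ differ by an element of $H \cap (g^{-1}V \cdot gV) \subset H \cap V \cdot V^{-1}$, and after a small shrinking of $V$ this forces them to lie in the same coset of $A$. Thus $[H : A] \leq C_n$, giving the Jordan constant depending only on $n$.

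The main obstacle is the quantitative commutator lemma on $U(n)$: one must pick $V$ small enough that the commutator strictly contracts the distance to $\mathrm{Id}$ and that the covering/coset bookkeeping yields a clean index bound. Once the neighborhood is chosen with the right contraction constant, the finiteness of $H$ does the rest of the work.
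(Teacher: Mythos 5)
The paper offers no proof of this statement at all: it is quoted as Jordan's classical theorem with a reference to Curtis--Reiner (Theorem 36.13), so the only question is whether your argument is complete. Your unitarization step (averaging a Hermitian form to conjugate $H$ into $\mathrm{U}(n)$) and your covering argument for the index bound (cover $\mathrm{U}(n)$ by $C_n$ translates of a smaller ball $V'$ with $V'^{-1}V' \subset V$, so that elements of $H$ in the same translate lie in the same coset of $A = \langle H \cap V\rangle$, which is normal since $V$ is conjugation-invariant) are both standard and fine. The genuine gap is in the claim that $A$ is abelian. From noncommuting $x, y \in H \cap V$ you get that $z = [x,y]$ is a nontrivial element strictly closer to $\mathrm{Id}$, but ``iterating'' is not available: nothing forces $z$ to fail to commute with $x$, with $y$, or with anything else in $H \cap V$, so the descent can stop after one step, and a single new element closer to the identity is no contradiction in a finite group. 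The danger is real: in the extraspecial (Heisenberg) subgroup of $\mathrm{U}(p)$ generated by the clock and shift matrices, the commutator of the two noncommuting generators is the central scalar $e^{2\pi i/p}\,\mathrm{Id}$, which is arbitrarily close to the identity for large $p$ and commutes with everything; what saves Jordan's theorem is that the generators themselves are then forced far from the identity, and that is precisely what your argument does not prove.

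The standard repair (Frobenius) is a minimality-plus-spectral argument rather than descent. Among all ordered pairs of noncommuting elements of $H \cap V$ choose $(x,y)$ with $\|y - \mathrm{Id}\|$ minimal; since $\|z - \mathrm{Id}\| \le 2\|x-\mathrm{Id}\|\,\|y-\mathrm{Id}\| < \|y-\mathrm{Id}\|$, minimality forces $z$ to commute with every element of $H \cap V$, in particular with $x$ and $y$. Then $x^k y x^{-k} = z^k y$ for all $k$, so each $z^k y$ is unitarily conjugate to $y$ and has $\|z^k y - \mathrm{Id}\| = \|y - \mathrm{Id}\| < 1/2$. But $z \ne \mathrm{Id}$ is unitary of finite order, so it has an eigenvalue $\zeta \ne 1$ that is a root of unity of some order $d \ge 2$; the corresponding eigenspace is $y$-invariant, and on a common eigenvector the eigenvalues $\zeta^k\mu$ of $z^k y$ are $d$ equally spaced points on the unit circle, so for some $k$ one has $\|z^k y - \mathrm{Id}\| \ge |\zeta^k\mu - 1| \ge \sqrt{2}$, a contradiction. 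With this lemma (or an equivalent Zassenhaus-type argument plus additional work) inserted, your outline becomes the classical proof; without it, the key step is unproved.
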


It is natural to ask whether there exists a compact complex manifold whose automorphism group is not Jordan. There is no such manifold known. Now the Jordan property of an automorphism group is proved for compact K\"ahler manifolds (see \cite{Kim}), all compact complex manifolds of dimension $2$ (see \cite{surfaces}) and all Moishezon manifolds of dimension $3$ (see \cite{moish}). There is also a result that generalizes the two results mentioned above (see \cite{Meng}). Note that there exist compact real manifolds with non-Jordan automorphism groups (see \cite{contr}).

Here we prove the Jordan property of the automorphism group of one more type of compact complex manifolds.

\begin{definition}[{see \cite[Introduction]{hoph}}] \label{hoph}
A compact complex manifold of dimention $n \geqslant 2$ is called a Hopf manifold if its universal cover is isomorphic to $\Co^n \backslash 0$.
\end{definition}

\begin{theorem} \label{main}
Let $Y$ be a Hopf manifold. Then the group $\mathrm{Aut}(Y)$ is Jordan.
\end{theorem}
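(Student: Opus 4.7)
The plan is to lift automorphisms of $Y$ to the universal cover $\Co^n \setminus \{0\}$, extend them via Hartogs' theorem to biholomorphisms of $\Co^n$ fixing the origin, and use the derivative at $0$ to reduce the Jordan property for $\mathrm{Aut}(Y)$ to that of $\GLn$ (Theorem~\ref{GLn}). For $n \geq 2$, Hartogs' extension theorem gives $\mathrm{Aut}(\Co^n \setminus \{0\}) = \mathrm{Aut}(\Co^n, 0)$, so writing $\Gamma := \pi_1(Y) \subset \mathrm{Aut}(\Co^n, 0)$ and $N := N_{\mathrm{Aut}(\Co^n, 0)}(\Gamma)$, one has the identification $\mathrm{Aut}(Y) \cong N/\Gamma$.

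By the structure theorem for Hopf manifolds, $\Gamma$ is virtually infinite cyclic, with a finite-index central subgroup $\langle \gamma_0 \rangle$ whose generator is a contraction: $A := d_0 \gamma_0 \in \GLn$ has spectral radius $< 1$. Since contractions form a conjugation-invariant subset of $\Gamma$ and $\gamma_0$ is canonical up to finite ambiguity, every element of $N$ centralizes $\gamma_0$. Next, I would use the derivative $d_0 \colon \mathrm{Aut}(\Co^n, 0) \to \GLn$. By Bochner's linearization theorem, any finite subgroup of $\mathrm{Aut}(\Co^n, 0)$ is analytically conjugate to a subgroup of $\GLn$, so $\ker d_0$ is torsion-free. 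For a finite subgroup $H \subset \mathrm{Aut}(Y)$ with preimage $\tilde H \subset N$: an element $g \in \tilde H$ with $d_0 g = I$ satisfies $g^{|H|} \in \Gamma$; since $A$ is contracting, $d_0$ is injective on $\Gamma$, forcing $g^{|H|} = 1$ and then $g = 1$ by torsion-freeness of $\ker d_0$. Thus $d_0$ embeds $\tilde H$ into $Z_{\GLn}(A)$, with the central cyclic subgroup $\langle A \rangle$ of finite index $|H|$.

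The last step is to deduce Jordan for $H = \tilde H/\Gamma$ from this linear embedding. I would take the Zariski closure $G := \overline{\tilde H}^{\mathrm{Zar}}$ in $\GLn$; its identity component is $T_A := \overline{\langle A \rangle}^{\mathrm{Zar}}$, an abelian algebraic group central in $Z_{\GLn}(A)$, and $G/T_A$ is a finite subgroup of the linear algebraic group $Z_{\GLn}(A)/T_A$. Applying Theorem~\ref{GLn} to this quotient yields an abelian subgroup of $G/T_A$ of index bounded in terms of $Y$; pulling back produces a subgroup $M \subset H$ of bounded index which is a central extension of an abelian group by the abelian group $(\tilde H \cap T_A)/\langle A \rangle$. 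The main obstacle is to upgrade this $2$-step nilpotent $M$ to an abelian subgroup of bounded index in $M$. My approach would be to exploit that commutators in the lift $\tilde M$ lie in $T_A \cap \mathrm{SL}_n(\Co)$ (commutators in $\GLn$ have determinant $1$), which combined with the arithmetic of $A$ and Schur's lemma controls the order of $[M, M]$ uniformly, yielding the desired Jordan bound; the difficulty lies in the fact that $H$ itself does not embed into $\GLn$, so the Jordan property has to be transferred through the quotient by the virtually cyclic $\Gamma$.
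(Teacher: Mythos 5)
Your overall strategy is the same as the paper's: lift a finite subgroup of $\mathrm{Aut}(Y)$ to the universal cover, extend to $\Co^n$ by Hartogs, take differentials at $0$, and reduce to a subgroup $M \subset \GLn$ containing a central infinite cyclic subgroup $\langle A \rangle$ of finite index, where $A$ is the differential of a contraction. But there is a genuine gap at the decisive step, and you flag it yourself: after taking the Zariski closure and pulling back an abelian subgroup of $G/T_A$, you only obtain a bounded-index subgroup of $H$ that is a central extension of an abelian group by an abelian group, i.e.\ nilpotent of class $2$, and your proposed upgrade --- ``control the order of $[M,M]$ uniformly, yielding the desired Jordan bound'' --- does not work. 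Bounding the order of the commutator subgroup of a class-$2$ finite group does not bound the index of an abelian subgroup: extraspecial $p$-groups of order $p^{2k+1}$ have commutator subgroup of order $p$, yet their abelian subgroups have index at least $p^{k}$, which is unbounded. Since, as you note, $H$ itself does not embed into $\GLn$, no direct appeal to Theorem \ref{GLn} closes this hole, and the argument as written does not prove the statement.

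The paper fills exactly this hole with its Sections 2--3, by a different mechanism: Proposition \ref{group_add1} produces a surjection $\rho \colon M \to \mathbb{Z}$ with $\rho(K)=m$ for the central generator $K$, Lemma \ref{lemm} produces an $m$-th root $\widehat{K}$ of $K$ commuting with everything that commutes with $K$, and the ``untwisting'' homomorphism $A \mapsto A\widehat{K}^{\rho(A)}$ kills $K$ and yields a \emph{finite} subgroup $H' \subset \GLn$ whose minimal abelian-subgroup index equals that of $H$ (Proposition \ref{group_add2} together with Statement \ref{late}); then Jordan for $\GLn$ finishes (Theorem \ref{group_main}). If you want to salvage your route, this root-extraction/untwisting step (or an equivalent cohomological argument splitting the central extension up to bounded index) is the missing ingredient you must supply. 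Two smaller points are fixable but should be argued: the claim that every element of the normalizer centralizes $\gamma_0$ requires first replacing $\langle\gamma_0\rangle$ by a characteristic infinite cyclic subgroup of $\pi_1(Y)$ (as in Proposition \ref{for}) and then using that a conjugate of a contraction is again a contraction to exclude inversion; and your constant depends on $Y$ through $A$ and the embedding of $Z_{\GLn}(A)/T_A$, which is acceptable for the statement but weaker than the paper's bound depending only on $n$.
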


In the case of Hopf surfaces ($n = 2$) Theorem \ref{main} is proved in \cite[Lemma 6.4]{surfaces} and also follows from \cite[Theorem 8.1]{surfaces2}. \\[-5pt]

In Sections $2$ and $3$ we prove several auxiliary statements from group theory and linear algebra, in Section $4$ we recall the structure of Hopf manifolds, and in Section $5$ we prove Theorem \ref{main}. \\

I would like to thank my advisor Constantin Shramov for stating the problem, useful discussions and constant attention to this work. I also want to thank Alexander Gaifullin for his help with incorporating topological statements.

\section{Linear algebra}
In this section we prove several elementary statements about commuting matrices.

Let us remind the reader the definition of a root subspace:

\begin{definition}
Suppose $V$ is a vector space, $K$ is a linear operator on $V$, and $\lambda$ is an eigenvalue of $K$. Then the space $$\underset{j}{\bigcup} \ker (K - \lambda)^j$$ is called the root subspace of $K$ corresponding to the eigenvalue $\lambda$.
\end{definition}

\begin{statement} \label{st}
Let $K \in \GLn$ be a matrix. Suppose $A \in \GLn$ commutes with $K$. Then the root subspaces of $K$ are $A$-invariant.
\end{statement}

\begin{proof}
Let $\lambda_1, \ldots, \lambda_k$ be the eigenvalues of $K$. Denote by $K_i$ the root subspace of $K$ that corresponds to an eigenvalue $\lambda_i$. By the definition $$K_i = \underset{j}{\bigcup} \ker (K - \lambda_i)^j.$$ Let us prove that $\ker(K-\lambda_i)^j$ is $A$-invariant for all $j$. Let $v$ be a vector in $\ker(K-\lambda_i)^j$. Then $(K-\lambda_i)^j(v) = 0$ and therefore $$A(K-\lambda_i)^j(v) = (K-\lambda_i)^jA(v) = 0.$$ Consequently, $A(v)$ belongs to $\ker(K-\lambda_i)^j$, so $\ker(K-\lambda_i)^j$ is $A$-invariant, and so the root subspaces of $K$ are also $A$-invariant.
\end{proof}

\begin{lemma} \label{lemm}
Let $K$ belong to $\GLn$ and $m$ be a natural number. Then there exists $\widehat{K} \in \GLn$ such that $\widehat{K}^m = K$, and each $A \in \GLn$ which commutes with $K$ also commutes with $\widehat{K}$.
\end{lemma}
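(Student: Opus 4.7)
The plan is to construct $\widehat{K}$ so that on every root subspace $V_i$ of $K$ the restriction $\widehat{K}|_{V_i}$ is a polynomial in $K|_{V_i}$. Once that is achieved, Statement~\ref{st} finishes the job: any $A \in \GLn$ commuting with $K$ preserves each $V_i$, so $A|_{V_i}$ commutes with $K|_{V_i}$ and hence with any polynomial in it, in particular with $\widehat{K}|_{V_i}$. Collecting these block-wise commutation relations gives $A\widehat{K} = \widehat{K}A$ on the whole of $\mathbb{C}^n = \bigoplus_i V_i$.

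To build $\widehat{K}|_{V_i}$, I use that $K$ is invertible, so the eigenvalue $\lambda_i$ attached to $V_i$ is nonzero, and I can write $K|_{V_i} = \lambda_i I + N_i$ with $N_i$ nilpotent on $V_i$. Picking any $\mu_i \in \mathbb{C}$ with $\mu_i^m = \lambda_i$, I define
$$\widehat{K}|_{V_i} \;=\; \mu_i \sum_{k=0}^{\dim V_i - 1} \binom{1/m}{k}\,(\lambda_i^{-1} N_i)^k,$$
which is by construction a polynomial in $K|_{V_i}$ and is invertible because its semisimple part is the nonzero scalar $\mu_i$. The identity $(\widehat{K}|_{V_i})^m = K|_{V_i}$ should then follow from the formal power series identity $\bigl(\sum_{k\geqslant 0}\binom{1/m}{k} x^k\bigr)^m = 1 + x$ in $\mathbb{C}[[x]]$, specialized at the nilpotent element $\lambda_i^{-1} N_i$, for which the series truncates and the equality becomes an honest operator equation.

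The step I expect to require the most care is precisely this passage from the formal binomial identity to an operator identity on $V_i$: one must confirm that truncating at degree $\dim V_i - 1$ loses no nonzero contribution when the expression is raised to the $m$-th power, which ultimately reduces to $N_i^{\dim V_i} = 0$. Everything else—assembling the block-wise pieces into a single $\widehat{K} \in \GLn$, checking $\widehat{K}^m = K$, and deducing commutation with $A$—is routine bookkeeping on top of Statement~\ref{st}, and notably does not require exhibiting $\widehat{K}$ as a single polynomial in $K$ globally, since the blockwise polynomial description is all that Statement~\ref{st} needs.
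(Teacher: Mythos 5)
Your proposal is correct and follows essentially the same route as the paper: decompose $\Co^n$ into the root subspaces of $K$, define the $m$-th root on each block $\lambda_i I + N_i$ by the truncated binomial (Taylor) series, and use Statement~\ref{st} to reduce commutation with $\widehat{K}$ to blockwise commutation with a polynomial in $K|_{V_i}$. Your explicit handling of the formal identity $\bigl(\sum_k \binom{1/m}{k}x^k\bigr)^m = 1+x$ modulo $x^{\dim V_i}$ just makes precise the step the paper states more briefly.
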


\begin{proof}
First we will prove the lemma for the matrices with all eigenvalues equal to each other.

Suppose $K$ is such a matrix. Then $K = \lambda E + N$, where $\lambda$ is the eigenvalue of $K$ and $N$ is a nilpotent matrix.

Consider the Taylor series for $\sqrt[m]{\lambda + x}$. If we replace $x$ by $N$, then all terms of the series except the finite number of them will be equal to $0$, as $N$ is a nilpotent. So $\sqrt[m]{K}$ exists and is equal to $a_0 E + a_1 N + \ldots + a_n N^n$, where $a_1, \ldots a_n$ are complex numbers. Let $A$ be a matrix commuting with $K = \lambda E + N$. Then $A$ commutes with $N$, as all scalar matrices lie in the center of $\GLn$. Consequently it commutes with any polynomial in $N$, in particular with $\sqrt[m]{K}$.

Now let $K$ be any matrix in Jordan normal form. Let's split $K$ into the root subspaces $K_i$.

Let
$$\widehat{K} = 
      \begin{pmatrix}
      \widehat{K}_1 &0 &\ldots &0 \\
      0 &\widehat{K}_2 &\ldots &0 \\
      \vdots &\vdots &\ddots &\vdots \\
      0 &0 &\ldots &\widehat{K}_k
      \end{pmatrix},$$
where $\widehat K_i$ is a matrix, constructed from $K_i$ and $m$ by the first part of the proof.

Then $\widehat{K}^m = K$ and $\widehat{K}A = A \widehat{K}$ for any $A$, that commutes with $K$, because if $A$ and $K$ commute, then the root subspaces of $K$ are $A$-invariant, and so the restrictions of $A$ and $K$ on these subspaces commute.
\end{proof}

\section{Group theory}

In this section we prove several statements about groups. We introduce several notations: denote by $e$ the neutral element of a group, $Z(G)$ is the center of a group $G$. Denote by $[A \colon B]$ the index of  subgroup $B$ of a group $A$.

The next lemma is sometimes called Schur's Lemma.

\begin{lemma} [{see \cite[15.1.13]{kommutant}}] \label{shur}
Suppose $M$ is a group, and $\Gamma \subset M$ is a subgroup of $Z(M)$ isomorphic to $\mathbb{Z}$. Suppose $H = M/\Gamma$ is finite. Then the commutator subgroup $\mathcal{K}$ of $M$ is finite.
\end{lemma}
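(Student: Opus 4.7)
The plan is to show that $\mathcal{K} \cap \Gamma = \{e\}$, after which the projection $M \to M/\Gamma = H$ restricts to an injection on $\mathcal{K}$ and, $H$ being finite, $\mathcal{K}$ must itself be finite. To obtain the trivial intersection I will use the transfer (Verlagerung) homomorphism associated with the finite-index subgroup $\Gamma \subset M$.

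Concretely, set $n = [M \colon \Gamma] = |H|$, pick coset representatives $t_1, \ldots, t_n$ of $\Gamma$, and for each $g \in M$ write $g t_i = t_{\sigma_g(i)} h_i(g)$ with $h_i(g) \in \Gamma$; the assignment $V(g) = \prod_i h_i(g)$ defines a well-defined group homomorphism $V \colon M \to \Gamma$ (independence of the transversal uses that $\Gamma$ is abelian). Two standard properties are decisive. First, since $\Gamma$ is abelian, $V$ factors through $M/\mathcal{K}$, so $V(\mathcal{K}) = \{e\}$. Second, for $z \in \Gamma \subseteq Z(M)$, centrality gives $z t_i = t_i z$, so the permutation $\sigma_z$ is the identity and $h_i(z) = z$ for every $i$, whence $V(z) = z^n$. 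Combining these: for $z \in \mathcal{K} \cap \Gamma$ we have $z^n = V(z) = e$, and since $\Gamma \cong \mathbb{Z}$ is torsion-free this forces $z = e$.

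The only step that needs care is quoting the construction and basic properties of the transfer, but both are entirely standard; in fact, the centrality hypothesis trivializes most of the verification that $V$ is a homomorphism. An alternative elementary route notes that centrality of $\Gamma$ makes the commutator $[x, y]$ depend only on the cosets $x\Gamma, y\Gamma$, producing at most $|H|^2$ distinct commutators, a set closed under conjugation; Dietzmann's lemma then reduces the problem to bounding the order of one commutator. That calculation essentially reproduces what the transfer does in a single line, so I prefer the route above.
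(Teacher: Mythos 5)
Your proof is correct, but note that the paper offers no proof of Lemma~\ref{shur} to compare against: the lemma is simply quoted from Scott's book \cite{kommutant}, so your argument supplies a self-contained proof where the paper relies on a citation. The argument itself is sound: the transfer $V\colon M \to \Gamma$ lands in an abelian group and hence kills $\mathcal{K}$, centrality of $\Gamma$ gives $V(z)=z^{n}$ for $z\in\Gamma$, and torsion-freeness of $\Gamma\cong\mathbb{Z}$ then forces $\mathcal{K}\cap\Gamma=\{e\}$, so $\mathcal{K}$ embeds into the finite group $H$; this even gives the sharper bound $|\mathcal{K}|\leqslant|H|$, which the general Schur theorem (needed when the central subgroup has torsion) does not. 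It is worth pointing out that the coset-representative computation behind your transfer map --- that $a\mapsto a^{n}$ defines a homomorphism $M\to\Gamma$ when $\Gamma$ is central of index $n$ --- is exactly what the paper carries out by hand in the second proof of Proposition~\ref{group_add1}; moreover, your observation that $\ker V\cap\Gamma=\{e\}$ (so $\ker V$ is finite and $M/\ker V\cong\mathbb{Z}$) would reprove that proposition directly, so your route meshes well with, and partly subsumes, the machinery already present in the text. The alternative you sketch (finitely many commutators, closed under conjugation, plus Dietzmann's lemma) is the standard textbook proof of Schur's theorem and is the one that works without assuming $\Gamma$ torsion-free; in the present setting your transfer argument is shorter and gives more.
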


We will use Lemma \ref{shur} to prove the following proposition.

\begin{predl} \label{group_add1}
In the conditions of Lemma \ref{shur} there exists a normal subgroup $R \subset M$, such that $M/R \cong \mathbb{Z}$.
\end{predl}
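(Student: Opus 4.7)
The plan is to combine Lemma \ref{shur} with the structure theorem for finitely generated abelian groups, applied to the abelianization $M/\mathcal{K}$. By Lemma \ref{shur}, the commutator subgroup $\mathcal{K}$ is finite, and $M/\mathcal{K}$ is abelian by construction.

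First, I would analyze the composition $\Gamma \hookrightarrow M \twoheadrightarrow M/\mathcal{K}$. Its kernel is $\Gamma \cap \mathcal{K}$; since $\Gamma \cong \mathbb{Z}$ has no non-trivial finite subgroups and $\mathcal{K}$ is finite, this intersection is trivial. Therefore $\Gamma$ embeds into $M/\mathcal{K}$ as the subgroup $\Gamma \mathcal{K}/\mathcal{K} \cong \mathbb{Z}$ (note that $\mathcal{K}$ is normal, so $\Gamma \mathcal{K}$ really is a subgroup of $M$).

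Next, I would observe that $[M/\mathcal{K} : \Gamma \mathcal{K}/\mathcal{K}] = [M : \Gamma \mathcal{K}]$ divides $[M : \Gamma] = |H|$ and is therefore finite. So $M/\mathcal{K}$ is an abelian group containing $\mathbb{Z}$ as a subgroup of finite index; in particular it is finitely generated and of torsion-free rank one. By the structure theorem for finitely generated abelian groups, $M/\mathcal{K} \cong \mathbb{Z} \oplus T$ where $T$ is the (finite) torsion subgroup.

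Finally, I would take $R \subset M$ to be the preimage of $T$ under the projection $M \to M/\mathcal{K}$. Then $M/R \cong (M/\mathcal{K})/T \cong \mathbb{Z}$, and $R$ is normal in $M$ because $M/\mathcal{K}$ is abelian, so $T$ is automatically normal there, and the preimage of a normal subgroup is normal. I do not foresee a significant obstacle; the only delicate point is the triviality of $\Gamma \cap \mathcal{K}$, which uses crucially that $\Gamma$ is torsion-free while $\mathcal{K}$ is finite.
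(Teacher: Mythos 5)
Your argument is correct and follows essentially the same route as the paper's first proof: apply Schur's Lemma to get a finite commutator subgroup $\mathcal{K}$, note that the abelianization $M/\mathcal{K}$ is finitely generated of torsion-free rank one (which you justify a bit more carefully, via the embedding of $\Gamma$ with finite index), and take $R$ to be the preimage of the torsion part. No gaps; the paper's second, transfer-style proof is a genuinely different argument, but your proposal matches the first one.
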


\begin{proof} [First proof {\upshape (see \cite[Lemma 4.1]{pr})}]

Denote by $\mathcal{K}$ the commutator subgroup of $M$. By Lemma \ref{shur} it is finite. Consider $A = M/\mathcal{K}$. Note that $A$ is an infinite finitely-generated abelian group, so it is isomorphic to ${\mathbb{Z}^k \times B}$, where $B$ is a finite group. The quotient $M / \Gamma$ is finite and $\Gamma$ is isomorphic to $\mathbb{Z}$, so $k = 1$. Then $M/\mathcal{K} \cong \mathbb{Z} \times B$. Let $R$ be the preimage of $B$ by taking a quotient by $\mathcal{K}$. Then $R$ is a finite normal subgroup of $M$ such that $M/R$ is isomorphic to $\mathbb{Z}$.
\end{proof}

\begin{proof}[Second proof]
Let $\mu \colon M \to H = M / \Gamma$ be the canonical projection. Denote by $n$ the order of group $H$ and by $\rho$ a map from $M$ to $\Gamma$, such that $\rho (a) = a^n$. Note that the image of this map is a subset of $\Gamma$, as $\mu(\rho(a)) = \mu(a^n) = \mu(a)^n = e$ for all $a$. Let us prove that $\rho$ is a group homomorphism. Obviously, $\rho(e) = e$. The only thing left to prove is that $\rho(ab) = \rho(a)\rho(b)$ for all $a$, $b$.

Let's fix $a$, $b \in M$. For each element of $H$ let's choose any element from its preimage by $\mu$. Denote those elements by $x_1, x_2, \ldots, x_n$. The set of values of $x_1a,x_2a, \ldots,  x_na$ is a set of preimages of all elements of $H$, and so they are equal to $x_1a_1, \ldots, x_na_n$ (possibly in a different order), where $a_i$ are elements of $\Gamma$. Let $\sigma_a$ be a permutation of numbers from $1$ to $n$, such that $x_ia = x_{\sigma_a(i)}a_{\sigma_a(i)}$ for all $i$. Similarly define $b_i$ and $\sigma_b$.

Let's decompose $\sigma_a$ into disjoint cycles. For a cycle $(i \;\, \sigma_a(i) \; \ldots \; \sigma_a^{t-1}(i))$ we get
$$a^t = a_{\sigma_a(i)}a_{\sigma_a^2(i)} \ldots a_{\sigma_a^{t - 1}(i)}a_i x_i^{-1}x_{\sigma_a(i)}x_{\sigma_a(i)}^{-1}x_{\sigma_a^2(i)}x_{\sigma_a^2(i)}^{-1} \ldots x_{\sigma_a^{t-1}(i)} x_{\sigma_a^{t-1}(i)}^{-1} x_i = a_ia_{\sigma_a(i)}\ldots a_{\sigma_a^{t-1}(i)}.$$

Multiplying these expressions for all disjoint cycles (including those of length 1) we get $a^n = a_1 a_2 \ldots a_n$. Similarly $b^n = b_1b_2 \ldots b_n$.

Similarly with $a$ and $b$ we find the value of $(ab)^n$. Let $\sigma = \sigma_b \circ \sigma_a$. Then $$x_iab = a_{\sigma_a(i)}x_{\sigma_a(i)}b = a_{\sigma_a(i)}b_{\sigma(i)}x_{\sigma(i)}.$$ Decompose $\sigma$ into disjoint cycles. For a cycle $(i \;\, \sigma_a(i) \; \ldots \; \sigma_a^{t-1}(i))$ we get $$(ab)^t = a_{\sigma_a(i)}a_{\sigma_a \circ \sigma(i)} \ldots a_{\sigma_a \circ \sigma^{t-1}(i)}b_{\sigma(i)}b_{\sigma^2(i)}\ldots b_{\sigma^{t-1}(i)}b_ix_i^{-1}x_{\sigma(i)}x_{\sigma(i)}^{-1}x_{\sigma^2(i)}\ldots x_{\sigma^{t-1}(i)}^{-1}x_{\sigma^{t-1}(i)} x_i.$$

Multiplying for all disjoint cycles we achieve $(ab)^n = a_1a_2\ldots a_nb_1 b_2 \ldots b_n = a^nb^n$. So we proved that $\rho$ is a homomorphism.

Note that each element of $\Gamma$, apart from $e$, has infinite order, and so it doesn't map to $e$. So the kernel of $\rho$ intersects with $\Gamma$ only by $e$, and so it is finite, as $M/\Gamma$ is finite. Consequently $\ker \rho$ is a finite normal subgroup, such that $M/ \ker \rho \cong \mathbb{Z}$.
\end{proof}

Now we prove one more additional lemma.

\begin{statement} \label{late}
Suppose $H$ is a finite group, $\xi \colon H \to H'$ is the canonical projection onto a quotient of $H$ by a subgroup $N$ of $Z(H)$, and $\psi \colon H \to \Gamma'$ is a homomorphism from $H$ to an abelian group $\Gamma'$. Suppose the kernels of $\xi$ and $\psi$ intersect only by the neutral element. Denote by $\mathcal{A}$, $\mathcal{A}'$ abelian subgroups of the smallest indices of $H$ and $H'$ respectively. Then $[H \colon \mathcal{A}] = [H' \colon \mathcal{A}']$.
\end{statement}

\begin{proof}
Note that $\mathcal{A} \supset N$, because otherwise the group generated by $N$ and $\mathcal{A}$ would be abelian (as $N$ is a subgroup of $Z(H)$) and would be strictly larger than $\mathcal{A}$, which is not possible by the definition of $\mathcal{A}$. Moreover, $\xi(\mathcal{A})$ is an abelian group, and so $[H \colon \mathcal{A}] \geqslant [H' \colon \mathcal{A}']$. We will prove that the preimage of  $\mathcal{A}'$ by $\xi$ is abelian.

Consider any two elements $a, b \in \xi^{-1}(\mathcal{A}')$. Let's prove that they commute. Consider $c = aba^{-1}b^{-1}$. Note that $$\psi(c) = \psi(a)\psi(b)\psi(a)^{-1}\psi(b)^{-1} = e,$$ because $\Gamma'$ is abelian. So $c \in \ker(\psi)$. Similarly $$\xi(c) = \xi(a)\xi(b)\xi(a)^{-1}\xi(b)^{-1} = e,$$ because $\xi(a)$, $\xi(b)$ are the elements of $\mathcal{A}'$. So $c \in \ker(\xi)$. However $\ker(\psi) \cap \ker(\xi) = e$, and so $c = e$. That proves that $a$ and $b$ commute. Consequently $\xi^{-1}(\mathcal{A}')$ is an abelian group. So $[H \colon \mathcal{A}] \leqslant [H' \colon \mathcal{A}']$, and so $[H \colon \mathcal{A}] = [H' \colon \mathcal{A}']$.
\end{proof}

The next proposition is the crucial one to prove the main result of this section.

\begin{predl} \label{group_add2}
Let $M$ be a subgroup of $\GLn$, and $\Gamma \subset Z(M)$ be a subgroup of $M$, isomorphic to $\mathbb{Z}$. Suppose $H = M/\Gamma$ is finite. Let $\mathcal{A}$ be an abelian subgroup of $H$ of the smallest index. Then there exists a finite group 
$H' \subset \GLn$, such that $[H \colon \mathcal{A}] = [H' \colon \mathcal{A}']$, where $\mathcal{A}'$ is an abelian subgroup of $H'$ of the smallest index.
\end{predl}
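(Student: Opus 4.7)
My plan is to construct a finite subgroup $H' \subset \GLn$ as the image of $M$ under a carefully twisted homomorphism $f \colon M \to \GLn$, and then deduce the index equality by invoking Statement \ref{late} applied to the induced projection $H \to H'$ together with an auxiliary abelian-valued homomorphism out of $H$.

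First I would set $n = |H|$, pick a generator $K$ of $\Gamma$, and use Lemma \ref{lemm} with $m = n$ to produce $\widehat K \in \GLn$ satisfying $\widehat K^n = K$ and commuting with every matrix that commutes with $K$. Because $\Gamma \subset Z(M)$, this $\widehat K$ commutes with every element of $M$. The second proof of Proposition \ref{group_add1} already establishes that $a \mapsto a^n$ is a homomorphism $M \to \Gamma$; writing $a^n = K^{\varphi(a)}$ therefore defines a homomorphism $\varphi \colon M \to \mathbb{Z}$. Setting $f(a) = a \widehat K^{-\varphi(a)}$ and using that $\widehat K$ is central in $\langle M, \widehat K\rangle$, I can check that $f$ is a homomorphism; moreover $f(K^j) = K^j \widehat K^{-nj} = e$, so $\Gamma \subset \ker f$. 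Consequently $H' := f(M) \subset \GLn$ is finite, being a quotient of $H$.

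For the index comparison I would let $g \colon H \to H'$ be the surjection induced by $f$, and define $\bar\varphi \colon H \to \mathbb{Z}/n\mathbb{Z}$ by reducing $\varphi$ modulo $n$ (this is well-defined since $\varphi(K) = n$). Two facts must be checked in order to apply Statement \ref{late}: that $\ker g \subset Z(H)$, which follows because any $a \in \ker f$ is a power of $\widehat K$ and hence commutes with all of $M$, so $\ker f \subset Z(M)$ and its image in $H$ lies in $Z(H)$; and that $\ker g \cap \ker \bar\varphi = \{e\}$, which is immediate because $a \in \ker f$ forces $a = \widehat K^{\varphi(a)}$, and the divisibility $n \mid \varphi(a)$ then places $a$ in $\langle K\rangle = \Gamma$, i.e.\ $[a] = e$ in $H$. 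Statement \ref{late} then yields $[H \colon \mathcal{A}] = [H' \colon \mathcal{A}']$.

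The main obstacle I anticipate is engineering the twist correctly: the whole argument rests on choosing $f$ so that it simultaneously kills $\Gamma$ (which makes $H'$ finite) and has a kernel cleanly detected by an abelian-valued invariant on $H$ (which is exactly what Statement \ref{late} needs). The centrality of $\widehat K$ in $M$, supplied by Lemma \ref{lemm}, is what makes the twisted map a genuine homomorphism and simultaneously places $\ker g$ inside $Z(H)$, so everything funnels through the hypotheses of Statement \ref{late}.
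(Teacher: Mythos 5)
Your proof is correct and follows essentially the same route as the paper: twist the inclusion $M \subset \GLn$ by a root $\widehat K$ of the generator $K$ supplied by Lemma \ref{lemm}, take the finite image $H'$, and compare indices via Statement \ref{late} using an abelian-valued map that detects the kernel of $H \to H'$. The only difference is cosmetic: where the paper invokes Proposition \ref{group_add1} to get $\rho \colon M \to M/R \cong \mathbb{Z}$ with $\rho(K) = m$, you use the explicit transfer map $a \mapsto a^n$ (exactly the map built in the second proof of that proposition), which gives the slightly cleaner normalization $\varphi(K) = n$ and the reduction $\bar\varphi \colon H \to \mathbb{Z}/n\mathbb{Z}$.
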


\begin{proof}
Let $\mu \colon M \to H = M/\Gamma$ be the canonical projection. By Proposition \ref{group_add1} there exists a normal subgroup $R \subset M$, such that $M/R \cong \mathbb{Z}$. Denote by $\rho \colon M \to M/R$ the canonical projection (compare with the second proof of Proposition \ref{group_add1}). Let $K$ be a generating element of $\Gamma$ and $\rho(K) = m$.

Let $\widehat{K}$ be the matrix we get from $K$ and $m$ by Lemma \ref{lemm}. Consider a homomorphism $\varphi \colon M \to \GLn$ such that $A \mapsto A \widehat{K}^{\rho(A)}$, where $A$ is an element of $M$.

Let's denote the image of $\varphi$ by $H'$ and prove, that this is the group we are looking for in Proposition \ref{group_add2}. First we will prove that $H' \cong H/N$, where $N$ is a subgroup of $Z(H)$. Suppose $B$ is an element of $\ker \varphi$. Then there exists an integer $l$ such that $B = \widehat{K} ^ l$. So $B \in Z(M)$.

Let's prove that $K \in \ker \varphi$. As $\rho(K) = m$ we conclude that $$\varphi(K) = \widehat{K}^{-m}K = K^{-1}K = e.$$ So $\varphi = \xi \circ \mu$, where $\xi \colon H \to \im \varphi$ is such a map that $\ker \xi = \mu(\ker \varphi)$.

Now we know that $H' = H/\mu(\ker\varphi)$. The only thing left to prove is that $\mu(\ker\varphi)$ is a subgroup of $Z(H)$. It is so because any surjection maps the center of the preimage to a subgroup of the center of the image.

Let's prove that groups $H$ and $H'$ satisfy the conditions of Lemma \ref{late}. Note that $\xi \colon H \to H'$ is the canonical projection to a quotient of $H$ by a subgroup of $Z(H)$. Now we will construct a homomorphism $\psi$ from $H$ to an abelian group $\Gamma'$.

Consider $R \subset M$. Let $R'$ be an image of $R$ by $\mu$. Note that $R'$ is a normal subgroup of $H$, as an image of a normal subgroup by surjection is normal. Set $\Gamma' = H/R'$, and $\psi \colon H \to \Gamma'$ the canonical projection. Let $\psi'$ be the canonical projection from $\im \rho$ to $\im \rho / \rho(\Gamma)$. Note that $\psi' \circ \rho = \psi \circ \mu$, as the kernels of these maps coincide. Also, $\im(\psi' \circ \rho)$ is a cyclic abelian group $\mathbb{Z}/m\mathbb{Z}$, as $\im \rho \cong \mathbb{Z}$, and $\rho(K) = m$. So $\Gamma'$ is an abelian group. So we have proved that the diagram below is commutative. \\[-25pt]

$$\xymatrix@1{
\mathbb{Z} \ar[d]_(0.45){\psi'} & M \ar[d]_(0.45)\mu \ar[l]_\rho \ar[rd]^(0.45)\varphi \\
\Gamma' & H \ar[l]_(0.45)\psi \ar[r]^(0.4){\xi} & H'
}$$

It is left to prove that $\ker \xi \cap \ker \psi = e$. Note that $\ker \xi = N$, and $\ker \psi = R'$. Let $x$ be the element in an intersection of $N$ and $R'$. Let $x'$ be any preimage of $x$ by $\mu$. We know that $\mu(x') \in N$, and so $x' \in \ker \varphi$. Consequently $x' = \widehat{K} ^ {\rho(x')}$. But $\mu(x') \in R'$, so $(\psi' \circ \rho)(x') = e$, and so $\rho(x')$ is divisible by $m$. So $x' = \widehat{K} ^ {ml} = K^l$, where $l$ is an integer. So $x'$ is an element of the kernel of $\mu$, and so $x = e$. We proved that $H$ and $H'$ satisfy the condition of Lemma \ref{late}. So $H'$ is the group we need.
\end{proof}

Now we can prove a theorem we will use to prove Theorem \ref{main}.

\begin{theorem} \label{group_main}
For every natural $n$ there exists a constant $C$ satisfying the following condition: let $M$ be a subgroup of $\GLn$, which contains a normal subgroup $\Gamma$ of finite index, such that $\Gamma \cong \mathbb{Z}$. Denote by $c_M$ the smallest of the indices of abelian subgroups of $M/\Gamma$. Then $c_M < C$.
\end{theorem}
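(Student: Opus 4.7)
The plan is to reduce to the situation covered by Proposition \ref{group_add2} and then invoke Theorem \ref{GLn}. Note that Proposition \ref{group_add2} assumes $\Gamma \subset Z(M)$, whereas the hypothesis here only asks that $\Gamma$ be normal in $M$; so the first step is to achieve centrality at the cost of a bounded index.

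First I would consider the conjugation action of $M$ on $\Gamma$. Since $\Gamma \cong \mathbb{Z}$ has automorphism group $\{\pm 1\}$, this action is encoded in a homomorphism $M \to \mathbb{Z}/2\mathbb{Z}$; its kernel $M_0 = C_M(\Gamma)$ is therefore a subgroup of $M$ of index at most $2$, and by construction $\Gamma \subset Z(M_0)$. In particular $M_0/\Gamma$ has finite index (at most $2$) in $H = M/\Gamma$. If $\mathcal{A}_0$ is an abelian subgroup of $M_0/\Gamma$ of smallest index $c_{M_0}$, then $\mathcal{A}_0$ is still abelian when regarded as a subgroup of $M/\Gamma$, and its index in $M/\Gamma$ is at most $2 c_{M_0}$, so $c_M \leqslant 2 c_{M_0}$.

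Next I would apply Proposition \ref{group_add2} to the pair $(M_0, \Gamma)$. This produces a finite subgroup $H' \subset \GLn$ such that $c_{M_0} = [H' \colon \mathcal{A}']$, where $\mathcal{A}'$ is an abelian subgroup of $H'$ of smallest index. By Theorem \ref{GLn}, the group $\GLn$ is Jordan, so there exists a constant $C_0 = C_0(n)$, depending only on $n$, bounding the smallest index of an abelian subgroup in any finite subgroup of $\GLn$. In particular $c_{M_0} < C_0$, hence $c_M \leqslant 2 c_{M_0} < 2 C_0$; taking $C = 2 C_0$ yields the theorem.

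Since Proposition \ref{group_add2} already encapsulates the main technical work, the present argument is a short combination of that proposition with the classical theorem of Jordan; the only delicate point, which I would treat carefully and which is the reason for isolating the first step above, is arranging the centrality hypothesis $\Gamma \subset Z(M_0)$ required by Proposition \ref{group_add2}.
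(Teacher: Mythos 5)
Your proposal is correct and takes essentially the same approach as the paper: pass to the centralizer $M_0 = C_M(\Gamma)$, which has index at most $2$ since $\mathrm{Aut}(\Gamma) \cong \mathbb{Z}/2\mathbb{Z}$, so that $\Gamma \subset Z(M_0)$, and then apply Proposition \ref{group_add2} together with Theorem \ref{GLn}. The only difference is presentational: you carry out the factor-of-two bookkeeping $c_M \leqslant 2c_{M_0}$ explicitly, which the paper only states in passing.
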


Note that in this theorem we don't need $\Gamma$ to be a subgroup of the center of $M$, unlike the statements above.

\begin{proof}
Suppose $M$ is a subgroup of $\GLn$ and $H = M/\Gamma$. Then $H$ acts on $\Gamma$ by conjugations by elements of preimage. It is an action as $\Gamma$ is abelian. Consequently there exists a homomorphism $\varphi \colon H \to \mathrm{Aut}(\Gamma) \cong \mathbb{Z}/2\mathbb{Z}$, and so there is a subgroup of $H$ of index less or equal to two, that acts on $\Gamma$ identically. Note that if we prove Theorem \ref{group_main} for this subgroup, it would be proved for the whole group as the index increases no more than twice. Note that $H$ acts on $\Gamma$ identically if and only if $\Gamma$ is a subgroup of $Z(M)$. So now we have to prove the theorem only in case where $\Gamma$ lies in $Z(M)$.

By Proposition \ref{group_add2} the index $c_M$ is equal to the smallest index of an abelian subgroup of some finite subgroup of $\GLn$. The rest follows from the fact that $\GLn$ is Jordan (see Theorem~\ref{GLn}).
\end{proof}

The next proposition will help us to describe the structure of Hopf manifolds.

\begin{predl} \label{for}
Suppose $M$ is a group, and $\Gamma$ is a normal subgroup of $M$ of finite index isomorphic to $\mathbb{Z}$. Then there exists a characteristic subgroup $\Theta$ of $M$ such that $\Theta \cong \mathbb{Z}$ and $\Theta \subset \Gamma$.
\end{predl}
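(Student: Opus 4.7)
The plan is to take $\Theta$ to be the subgroup of $M$ generated by all $k$-th powers of elements of $M$, where $k = [M:\Gamma]$. The advantage of this construction is that it depends only on $M$ and on the integer $k$, which makes the resulting subgroup characteristic without any extra work; the only remaining task is to check that this $\Theta$ lies in $\Gamma$ and is non-trivial.

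Concretely, I would set $k = [M:\Gamma]$ (finite by hypothesis) and define $\Theta = \langle g^k : g \in M \rangle$. Any automorphism $\sigma$ of $M$ satisfies $\sigma(g^k) = \sigma(g)^k$, so $\sigma$ permutes the generating set of $\Theta$; hence $\sigma(\Theta) = \Theta$, and $\Theta$ is characteristic in $M$. Moreover, since $M/\Gamma$ has order $k$, every $g \in M$ satisfies $g^k \in \Gamma$, and therefore each generator of $\Theta$ lies in $\Gamma$, which gives $\Theta \subseteq \Gamma$.

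The last step is to show $\Theta \cong \mathbb{Z}$. Being a subgroup of $\Gamma \cong \mathbb{Z}$, the group $\Theta$ is either trivial or infinite cyclic, so it suffices to exhibit one non-trivial element. Choosing $\gamma$ to be a generator of $\Gamma$ and taking $g = \gamma$, the element $\gamma^k$ lies in $\Theta$ and is non-trivial, so $\Theta$ is non-trivial and hence isomorphic to $\mathbb{Z}$.

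There is no serious obstacle in this argument; the only thing that matters is the choice of exponent. Taking $k = [M:\Gamma]$ (or any positive multiple of it) is precisely what forces every $g^k$ into $\Gamma$, and the characteristic property then comes for free from the fact that $k$-th powers are preserved by every endomorphism of $M$.
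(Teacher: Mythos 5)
Your proof is correct, but it takes a different route from the paper. The paper fixes a generator $\gamma$ of $\Gamma$, sets $\Theta = \langle \gamma^{n} \rangle$ with $n = [M:\Gamma]$, and must then \emph{prove} this specific cyclic subgroup is characteristic: it first shows $\sigma(\gamma^{n}) \in \Gamma$ by pushing to the quotient $M/\Gamma$, and then pins down $\sigma(\gamma^{n}) = \gamma^{\pm n}$ by comparing indexes (the image subgroup has index $n\lvert t\rvert$ in $M$, which must equal $n^{2}$ since automorphisms preserve indexes). You instead take the verbal subgroup $\Theta = \langle g^{k} : g \in M\rangle$ with $k = [M:\Gamma]$, which is characteristic (indeed fully invariant) for free, and you only need Lagrange's theorem to see each generator $g^{k}$ lies in $\Gamma$, plus the observation that $\Theta$ contains $\gamma^{k} \neq e$ and every nontrivial subgroup of $\mathbb{Z}$ is infinite cyclic. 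Your argument is shorter and avoids the index computation; what the paper's construction buys is an explicit generator $\gamma^{n}$ and a controlled index $[M:\Theta] = n^{2}$, whereas your $\Theta$ is only known to sit somewhere between $\langle\gamma^{k}\rangle$ and $\Gamma$ --- but for the statement as posed, and for its later use in Lemma \ref{aut}, that extra precision is not needed, so your proof is a perfectly valid and arguably cleaner alternative.
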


\begin{proof}
Suppose $\mu$ is the canonical projection from $M$ to $M / \Gamma$, and $n$ is an index of $\Gamma$ in $M$. Let $k$ be a generating element of $\Gamma$. Let's prove that the group generated by $k^n$ is a characteristic subgroup of $M$. Let $\sigma$ be an automorphism of $M$. Note that $|\im \mu| = n$, so $$\mu(\sigma(k^n)) = \mu(\sigma(k)^n) = \mu(\sigma(k))^n = e,$$ and so  $\sigma(k^n) \in \Gamma$. Suppose $\sigma(k^n) = k^t$. Then the index of a subgroup generated by $\sigma(k^n)$ in $M$ is equal to $nt$. But the index of a group generated by $k^n$ in $M$ is equal to $n^2$ and so $t$ is equal to $n$ or $-n$. So we proved that the group generated by $k^n$ maps to itself by any automophism of $M$. So it is characteristic.
\end{proof}

\section{Hopf manifolds}

In this section we discuss the structure of Hopf manifolds. Here we don't prove anything new, all the results are taken from the articles of Kato \cite{hoph} and Kodaira \cite{kod}. However some of the proofs in those articles are not explicit enough, so we have chosen to reformulate them here.

\begin{definition} [{see \cite[Introduction]{hoph}}] \label{contraction}
An automorphism $\varphi \colon \Co^n \to \Co^n$ is called a contraction, if it preserves $0$ and satisfies the following two conditions: $\lim_{\nu\rightarrow+\infty}\varphi^{\nu}(x)=0$ for every $x\in \Co^n$; and for any small neighborhood $U$ of zero there exists $\nu_0 \in \mathbb{N}$ such that $\varphi^\nu (U) \subset U$ for all $\nu \geqslant \nu_0$.
\end{definition}

The differential of a contraction $d\varphi(0)$ is an invertible matrix with absolute values of all the eigenvalues less than $1$ (see \cite[Proof of Lemma 13]{Kato2}).

\begin{definition} [{see \cite[Introduction]{hoph}}] \label{primary}
A compact complex manifold is called a primary Hopf manifold if it is isomorphic to $(\Co^n \backslash 0) / \Gamma$, where $\Gamma \cong \mathbb{Z}$ is a group generated by a contraction.
\end{definition}

We remind the reader the definition of properly discontinuous action:

\begin{definition}[{see \cite[Definition 8, Theorem 9 (2)]{propdisc}}] \label{prop1}
The action of a group $M$ on a topological space $X$ is called properly discontinuous if for any compact $K \subset X$ the set $\{ g \in M \, | \, g(K) \cap K \ne \emptyset\}$ is finite.
\end{definition}

Suppose $X$ is a complex manifold, and $M$ acts on $X$ so that the quotient of $X$ by the action of $M$ is a compact complex manifold. Then the action of $M$ is properly discontinuous.

The proof of the next lemma is taken from \cite[pages 694-695]{kod} and \cite[pages 48-49]{hoph}.

\begin{statement} \label{el}
Suppose $Y$ is a Hopf manifold and $M$ is a subgroup of the group of automorphisms of $\Co^n \backslash 0$ such that $Y = (\Co^n \backslash 0) / M$. Then $M$ contains a contraction $g$, such that the group generated by $g$ is a normal subgroup of $M$ of finite index.
\end{statement}

\begin{proof}
By Hartogs theorem $M$ acts on $\Co^n$ preserving $0$. Suppose $B$ is a closed unit ball in $\Co^n$ and $S$ is its boundary. Consider $g \in M$ such that $g(S) \cap S = \emptyset$. It exists as $M$ acts properly discontinuously on $\Co^n \backslash 0$. Then either $B \backslash S$ contains $g(B)$, or $g(B \backslash S)$ contains $B$. We can consider only the first variant as we can get it from the second one by replacing $g$ by $g^{-1}$. We get that $$g^k(B) \subset g^{k-1}(B) \backslash N_{k-1},$$ where $N_{k-1}$ is a boundary of $g^{k-1}(B)$. Let's prove that $$\underset{k \geqslant 1}{\bigcap} \, g^k (B) = 0.$$

We will call a norm of a compact set the maximal distance from a point of the set to zero. Let $m_i$ be the norm of $g^i(B)$. Then, as $g^k(B) \subset g^{k-1}(B)$, the norms $m_i$ form a decreasing sequence. Let $m$ be its' limit. Suppose $m \ne 0$. Let $D$ be the set of all points $d$ such that $m \leqslant ||d|| \leqslant 1$. Then $D$ is compact and $D \cap g^i(D) \ne 0$ for every $i$, as the norm of $N_i \subset g^i(D)$ is equal to the norm of $g^i(B)$ and so is greater than $m$. But it is a contradiction with Definition \ref{prop1}. So $m = 0$ and then $$\underset{k \geqslant 1}{\bigcap} \, g^k (B) = 0.$$ 


Now we will prove that $g$ is a contraction. First we prove that for every small neighborhood of zero $U$ there exists $\nu_0 \in \mathbb{N}$ such that $\varphi^\nu (U) \subset U$ for all $\nu \geqslant \nu_0$. Suppose $U$ is a subset of $B$. Then as $$\underset{k \geqslant 1}{\bigcap} \, g^k (B) = 0,$$ there exists $\nu_0$ such that $g^{\nu_0}(B) \subset U$, and so $g^\nu(U) \subset U$ for all $\nu \geqslant \nu_0$. It is left to prove that $\lim_{\nu\rightarrow+\infty}\varphi^{\nu}(x)=0$ for every $x \in \Co^n$. Note that for points inside $B$ we've already proved that. Now suppose $x$ is an arbitrary point, $S'$ is a sphere in $\Co^n$ with the center $0$ that contains $x$. Let $D$ be the compact part of $\Co^n$ which lies between $S$ and $S'$. By Definition \ref{prop1} there exists $r$ such that $g^r(D) \cap D = \emptyset$. But also, as we proved above, $g^r(B) \subset B$ for all $r$. So $g^r(x) \in B$, and for every point $y$ of $B$ we've proved that $\lim_{\nu\rightarrow+\infty}g^\nu y=0$. So $\lim_{\nu\rightarrow+\infty}g^{\nu}x=0$.

Let $\Gamma$ be a subgroup generated by $g$. We will prove that $\Gamma$ contains a normal subgroup of $M$ of finite index. As $\Gamma$ is a subgroup of $M$, there exists a commutative diagram

$$\xymatrix@1{
\Co^n \backslash 0 \ar[r] \ar[rd] & (\Co^n \backslash 0)/\Gamma \ar[d]\\
& Y
}$$

where both $(\Co^n \backslash 0)/\Gamma$ and $Y$ are compact. Note that there is no infinite covering maps between compact complex manifolds, so the index of $\Gamma$ in $M$ is finite. So there exists a normal subgroup of $M$ that is contained in $\Gamma$. This is a normal subgroup of finite index generated by a contraction.
\end{proof}

So we proved the following statement:

\begin{corollary} Every Hopf manifold can be obtained from a primary Hopf manifold by taking a quotient by a finite group.
\end{corollary}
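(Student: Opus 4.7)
The plan is to use Statement \ref{el} to extract a normal contraction subgroup from $M$ and show that the intermediate quotient is a primary Hopf manifold. Concretely, if $Y = (\Co^n \backslash 0)/M$ is a Hopf manifold, Statement \ref{el} gives a contraction $g \in M$ such that $\Gamma = \langle g \rangle$ is a normal subgroup of $M$ of finite index. I would then consider the intermediate quotient $X = (\Co^n \backslash 0)/\Gamma$ and the natural projection $X \to Y$.

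First I would verify that $X$ is a primary Hopf manifold in the sense of Definition \ref{primary}. The group $\Gamma$ is isomorphic to $\mathbb{Z}$ and is generated by the contraction $g$, so it only remains to check that $X$ is a compact complex manifold. Since $\Gamma \subset M$ and $M$ acts freely and properly discontinuously on $\Co^n \backslash 0$, so does $\Gamma$; hence $X$ is a complex manifold. Compactness follows because the natural map $X \to Y$ is a covering with deck group $M/\Gamma$, which is finite by Statement \ref{el}, so $X$ is a finite cover of the compact manifold $Y$.

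Next I would observe that the normality of $\Gamma$ in $M$ gives a well-defined action of the finite group $F = M/\Gamma$ on $X$, and that $Y$ is canonically identified with $X/F$. This is essentially formal: any $m \in M$ descends to a biholomorphism of $X$ because $m \Gamma m^{-1} = \Gamma$, and two elements of $M$ induce the same biholomorphism of $X$ exactly when they differ by an element of $\Gamma$, giving an injective homomorphism $F \to \mathrm{Aut}(X)$. The orbit map $(\Co^n \backslash 0) \to X \to X/F$ factors through the quotient by $M$ and vice versa, yielding $Y \cong X/F$.

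I do not expect a serious obstacle here — the work has already been done in Statement \ref{el}. The only point that requires a little care is making sure that the $F$-action on $X$ inherits the good properties (freeness is not needed, but well-definedness and finiteness are) so that the corollary follows cleanly from the identification $Y = X/F$ with $X$ primary Hopf and $F$ finite.
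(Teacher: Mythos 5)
Your argument is correct and is exactly how the paper obtains this corollary: it is stated as an immediate consequence of Statement \ref{el}, with $X = (\Co^n \backslash 0)/\Gamma$ the primary Hopf manifold and $Y = X/(M/\Gamma)$ the quotient by the finite group $M/\Gamma$. The routine verifications you spell out (that $X$ is compact as a finite cover of $Y$ and that the $M/\Gamma$-action on $X$ is well defined by normality of $\Gamma$) are left implicit in the paper but match its intent.
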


Note that the quotient map from the corollary above is not unique. However there are the preferable ones. We will prove that for every Hopf manifold $Y$ there exists a primary Hopf manifold $X$ such that all the automorphisms of $Y$ can be raised to automorphisms of $X$.

\begin{lemma} \label{aut}
For every Hopf manifold $Y$ there exists a primary Hopf manifold $X$ such that $Y = X / G$, where $G$ is a finite group, and there exists an exact sequence of groups $$1 \to G \to \widehat{\mathrm{Aut}}(Y) \to \mathrm{Aut}(Y) \to 1,$$ where $\widehat{\mathrm{Aut}}(Y) \subset \mathrm{Aut}(X)$.
\end{lemma}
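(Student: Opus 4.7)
The plan is to build $X$ and $\widehat{\mathrm{Aut}}(Y)$ via a covering-space argument. Write $Y = (\Co^n \backslash 0)/M$ for the deck group $M$ of the universal cover. Statement \ref{el} supplies a cyclic normal subgroup $\langle g \rangle \subset M$ of finite index, generated by a contraction. Feeding $M$ and $\langle g \rangle$ into Proposition \ref{for}, I obtain a \emph{characteristic} subgroup $\Theta$ of $M$ with $\Theta \cong \mathbb{Z}$ and $\Theta \subset \langle g \rangle$; since $\Theta$ then has finite index in $\langle g \rangle \cong \mathbb{Z}$, its generator is some power $g^N$, and a direct check from Definition \ref{contraction} shows that powers of contractions are contractions. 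Setting $X = (\Co^n \backslash 0)/\Theta$ then gives a primary Hopf manifold, and $G := M/\Theta$ is a finite group with $Y = X/G$.

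For the exact sequence I pass through the universal cover. Since $\Co^n \backslash 0$ is simply connected (this uses $n \geq 2$), every $f \in \mathrm{Aut}(Y)$ lifts to a biholomorphism $\tilde f$ of $\Co^n \backslash 0$, and by a standard deck-transformation argument $\tilde f M \tilde f^{-1} = M$; two lifts of the same $f$ differ by an element of $M$. Letting $N(M)$ denote the normalizer of $M$ in $\mathrm{Aut}(\Co^n \backslash 0)$, this produces the identification $\mathrm{Aut}(Y) = N(M)/M$, and in the same way $\mathrm{Aut}(X) = N(\Theta)/\Theta$.

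The crucial input is now that $\Theta$ is characteristic in $M$: any $\tilde f \in N(M)$ therefore also normalizes $\Theta$, so $N(M) \subset N(\Theta)$. I define
$$\widehat{\mathrm{Aut}}(Y) := N(M)/\Theta \subset N(\Theta)/\Theta = \mathrm{Aut}(X).$$
Since $\Theta \subset M$, the natural surjection $N(M)/\Theta \to N(M)/M = \mathrm{Aut}(Y)$ has kernel $M/\Theta = G$, producing the required exact sequence.

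The main obstacles are the two verifications I am glossing over. First, that every automorphism of $Y$ really does admit a lift to $\Co^n \backslash 0$ belonging to $N(M)$: this is the usual lifting criterion plus a deck-transformation computation, but must be written out carefully. Second, that the inclusion $\widehat{\mathrm{Aut}}(Y) \hookrightarrow \mathrm{Aut}(X)$ is genuinely injective, i.e.\ if $\tilde f \in N(M)$ descends to the identity on $X$ then $\tilde f \in \Theta$. For the latter, for every $x \in \Co^n \backslash 0$ there is a unique $\theta(x) \in \Theta$ with $\tilde f(x) = \theta(x) \cdot x$ (uniqueness uses that the action is free); discreteness of $\Theta$ and connectedness of $\Co^n \backslash 0$ then force $\theta$ to be constant, so $\tilde f \in \Theta$. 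Beyond these two routine but essential checks, everything else is a straightforward diagram chase among the groups $\Theta \subset M \subset N(M) \subset N(\Theta)$.
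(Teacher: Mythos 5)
Your proposal is correct and follows essentially the same route as the paper: Statement \ref{el} plus Proposition \ref{for} give the characteristic subgroup $\Theta$, $X = (\Co^n \backslash 0)/\Theta$ is the primary Hopf manifold, and the exact sequence comes from lifting to the universal cover and using that $\Theta$ is characteristic in $M$ (your $N(M)/\Theta$ is exactly the paper's $\widehat{\mathrm{Aut}}(Y)$, with $N(M) = \widetilde{\mathrm{Aut}}(Y)$). Your explicit checks that the generator of $\Theta$ is a contraction and that the kernel of $N(\Theta) \to \mathrm{Aut}(X)$ is exactly $\Theta$ are points the paper leaves implicit, and they are fine.
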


\begin{proof}

Let $Y$ be a Hopf manifold. Suppose $M$ is a subgroup of the automorphism group of $\Co^n \backslash 0$ such that $Y = (\Co^n \backslash 0) / M$. Let $g \in M$ be such a contraction that the group $\Gamma$ generated by $g$ is normal, and has finite index (see Lemma \ref{el}). Then by Proposition \ref{for} in $M$ exists a characteristic subgroup $\Theta \subset \Gamma$ such that $\Theta \cong \mathbb{Z}$. 

Let $X = (\Co^n \backslash 0) / \Theta$. Then $X$ is a primary Hopf manifold. Denote by $G$ the group $M / \Theta$. Then $G$ is finite and $Y = X/G$. 

As $\Co^n \backslash 0$ is the universal cover of $Y$, there exists an exact sequence of groups
$$1 \to M \to \widetilde{\mathrm{Aut}}(Y) \to \mathrm{Aut}(Y) \to 1,$$
where $\widetilde{\mathrm{Aut}}(Y)$ is the subgroup of $\mathrm{Aut}(\Co^n \backslash 0)$. As $\Theta$ is a characteristic subgroup of $M$, the group $\Theta$ is normal in $\widetilde{\mathrm{Aut}}(Y)$. So there exists an exact sequence 
$$1 \to \Theta \to \widetilde{\mathrm{Aut}}(Y) \to \widehat{\mathrm{Aut}}(Y) \to 1,$$
where $\widehat{\mathrm{Aut}}(Y) \subset \mathrm{Aut}(X)$. From the last two sequences we get the exact sequence
$$1 \to G \to \widehat{\mathrm{Aut}}(Y) \to \mathrm{Aut}(Y) \to 1.$$ \\[-35pt]
\end{proof}

\section{Proof of the main theorem}

For a point $P$ of a manifold $X$ we denote by $T_{P,X}$ the tangent space to $X$ in $P$. 

We remind the reader a general fact about finite subgroups of the group of automorphisms of a manifold.

\begin{theorem} [{see \cite[\textsection 2.2]{theorem}}] \label{th}
Suppose $X$ is an irreducible complex manifold and $H \subset \mathrm{Aut}(X)$ is a finite group, that preserves $P \in X$. Then the natural homomorphism $H \to \mathrm{GL}(T_{P,X})$ is an injection.
\end{theorem}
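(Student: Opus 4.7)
The plan is to show the map is injective by a local linearization / Taylor expansion argument, then globalize using the identity principle for holomorphic maps on an irreducible (in particular connected) complex manifold.

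First I would take an arbitrary $h \in H$ in the kernel, so that $h(P) = P$ and the induced linear map on $T_{P,X}$ is the identity. Picking local holomorphic coordinates $z = (z_1, \ldots, z_n)$ centered at $P$, I can write $h$ near $P$ as a holomorphic map $f \colon (U, 0) \to (U,0)$ with $df(0) = \mathrm{Id}$. Expanding as a power series, $f(z) = z + f_k(z) + O(|z|^{k+1})$, where $f_k$ is the first nontrivial homogeneous component of degree $k \geqslant 2$ (if no such $k$ exists, $f$ is already the identity near $P$ and we are done).

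The next step is to iterate. Since $H$ is finite, $h$ has some finite order $N$, so $f^N = \mathrm{id}$ in a neighborhood of $0$. A direct induction shows
$$f^N(z) = z + N\, f_k(z) + O(|z|^{k+1}),$$
because each composition contributes one more copy of the lowest-order deviation $f_k$ with no interaction from higher-order terms at degree $k$. Comparing with $f^N(z) = z$ and using $N \neq 0$ forces $f_k \equiv 0$, contradicting the choice of $k$. Hence no such $k$ exists, and $f$ is the identity on a neighborhood of $0$.

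Finally I would globalize: since $X$ is irreducible (hence connected) and $h$ agrees with the identity on a nonempty open set, the identity principle for holomorphic maps between complex manifolds gives $h = \mathrm{id}_X$. Therefore the kernel of $H \to \mathrm{GL}(T_{P,X})$ is trivial, which is what we wanted. The only genuinely delicate point is the Taylor-expansion computation of $f^N$, which is the heart of the argument; the rest is either setup or a standard application of the identity theorem.
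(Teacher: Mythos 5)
Your proof is correct, but note that the paper does not actually prove this theorem: it is quoted as a known result with a reference to Akhiezer's book (\textsection 2.2), so there is no internal argument to compare with. What you wrote is essentially the classical Cartan-type uniqueness argument that underlies the cited result: an element $h$ of the kernel fixes $P$ with $dh(P)=\mathrm{Id}$, and if its Taylor expansion at $P$ had a first nontrivial homogeneous term $f_k$ of degree $k\geqslant 2$, then $f^N(z)=z+Nf_k(z)+O(|z|^{k+1})$ would contradict $h^N=\mathrm{id}$ (this is where the finiteness of $H$ enters, through the finite order of $h$); hence the germ of $h$ at $P$ is the identity, and the identity principle on the connected (equivalently, irreducible) manifold $X$ gives $h=\mathrm{id}_X$. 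Two small points are worth making explicit in a careful write-up: the composition $f^N$ should be interpreted at the level of germs (or formal power series) at $P$, which is legitimate because $h(P)=P$, rather than as maps of one fixed neighborhood $U$ with $f(U)\subset U$; and the globalization step uses the standard open--closed argument for the set of points where the germ of $h$ coincides with that of the identity. With these understood, your argument is complete and self-contained, which is arguably a plus over the paper's bare citation.
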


Now we can prove Theorem \ref{main}.

We will prove it for primary Hopf manifolds first.

\begin{lemma} \label{s_main}
The automorphism group of a primary Hopf manifold is Jordan.
\end{lemma}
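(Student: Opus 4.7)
The plan is to push every finite subgroup $F \subset \mathrm{Aut}(X)$ up to a subgroup of $\GLn$ via the differential at the origin, and then invoke Theorem \ref{group_main}. Write $X = (\Co^n \backslash 0)/\Gamma$ with $\Gamma \cong \mathbb{Z}$ generated by a contraction $g$. First I would specialise Lemma \ref{aut} to the primary case (so that the finite group $G$ is trivial) to obtain the exact sequence
$$1 \to \Gamma \to \widetilde{\mathrm{Aut}}(X) \to \mathrm{Aut}(X) \to 1,$$
with $\widetilde{\mathrm{Aut}}(X) \subset \mathrm{Aut}(\Co^n \backslash 0)$. By Hartogs' theorem every such automorphism extends to an automorphism of $\Co^n$ fixing $0$, so the differential at the origin gives a homomorphism $d_0 \colon \widetilde{\mathrm{Aut}}(X) \to \GLn$.

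Given an arbitrary finite $F \subset \mathrm{Aut}(X)$, let $\widetilde{F}$ be its preimage in $\widetilde{\mathrm{Aut}}(X)$; then $\widetilde{F}$ contains $\Gamma$ as a normal subgroup of finite index and $\widetilde{F}/\Gamma \cong F$. The main step --- and the one I expect to be the only real obstacle --- is to verify that $d_0|_{\widetilde{F}}$ is injective, in two stages. Because $g$ is a contraction, $d_0(g)$ has every eigenvalue of modulus strictly less than $1$, so $d_0(g)^k \ne I$ for $k \ne 0$; hence $d_0|_\Gamma$ is injective. It follows that $\ker(d_0|_{\widetilde{F}})$ intersects $\Gamma$ trivially, therefore injects into $\widetilde{F}/\Gamma = F$ and is finite. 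But this kernel is then a finite subgroup of $\mathrm{Aut}(\Co^n)$ fixing $0$ whose derivative representation at $0$ is trivial, and Theorem \ref{th} (applied to the irreducible manifold $\Co^n$) forces it to be the trivial group.

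Once $d_0|_{\widetilde{F}}$ is injective, set $M := d_0(\widetilde{F}) \subset \GLn$. Then $d_0(\Gamma) \cong \mathbb{Z}$ is a normal subgroup of $M$ of finite index, and the isomorphism $\widetilde{F} \cong M$ descends to $F \cong M/d_0(\Gamma)$. Theorem \ref{group_main} (which does not require $d_0(\Gamma)$ to be central) then yields a constant $C = C(n)$ such that the smallest index of an abelian subgroup of $F$ is less than $C$. Since $F$ was an arbitrary finite subgroup of $\mathrm{Aut}(X)$, this shows $\mathrm{Aut}(X)$ is Jordan with Jordan constant $C$. The only delicate piece is the triviality of $\ker(d_0|_{\widetilde{F}})$; everything else is standard bookkeeping that converts the analytic statement into the linear-algebraic situation handled in Section~3.
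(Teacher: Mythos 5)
Your proposal is correct and follows essentially the same route as the paper: pass to the preimage of the finite subgroup in $\widetilde{\mathrm{Aut}}(X)$, use Hartogs to act on $\Co^n$ fixing $0$, show the differential at $0$ is injective (eigenvalues of the contraction of modulus less than $1$ give injectivity on $\Gamma$, finiteness of the kernel plus Theorem \ref{th} give triviality), and then apply Theorem \ref{group_main}. The only cosmetic difference is that you invoke Lemma \ref{aut} for the exact sequence where the paper takes it directly from the universal cover; this changes nothing.
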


\begin{proof}
Let $X$ be a primary Hopf manifold. Note that $X$ is a quotient of $\Co^n \backslash 0$ by an action of a group $\Gamma$ which is isomorphic to $\mathbb{Z}$. As $\Co^n \backslash 0$ is a universal cover of $X$ we conclude that there exists an exact sequence of groups $$1 \to \Gamma \to \widetilde{\mathrm{Aut}}(X) \to \mathrm{Aut}(X) \to 1,$$ where $\widetilde{\mathrm{Aut}}(X)$ is a subgroup of $\mathrm{Aut}(\Co^n \backslash 0)$. By Hartogs theorem the automorphisms of $\Co^n \backslash 0$ can be extended to the automorphisms of $\Co^n$, so we can say that $\widetilde{\mathrm{Aut}}(X)$ acts on $\Co^n$, preserving $0$.

Suppose $H$ is a finite subgroup of $\mathrm{Aut}(X)$, and $M$ is its preimage in $\widetilde{\mathrm{Aut}}(X)$. Then $\Gamma$ injects to $M$ as a kernel of the corresponding map. Consider the natural homomorphism $$\varsigma \colon M \to \mathrm{GL}(T_{0,\Co^n}) \cong \GLn.$$ The deferential of the map which generates $\Gamma$ is invertilble and has nonzero eigenvalues of absolute value less than $1$, so $\varsigma|_\Gamma$ is an injection. The group $\Gamma$ has finite index in $M$, so the kernel of $\varsigma$ is finite, and so, by Theorem \ref{th}, it is trivial. Consequently $\varsigma$ is an injection on $M$. By Theorem \ref{group_main} there exists a constant $C$ which depends on $n$ (but not on $\Gamma$ or $M$), such that the smallest index of abelian subgroups of $H = M/\Gamma$ is less than $C$. Consequently $\mathrm{Aut}(X)$ is Jordan.
\end{proof}

Now let's prove Theorem \ref{main} for any Hopf manifold.

\begin{proof}
By Lemma \ref{aut} for any Hopf manifold $Y$ there exists a primary Hopf manifold $X$ such that there exists an exact sequence $$1 \to G \to \widehat{\mathrm{Aut}}(Y) \to \mathrm{Aut}(Y) \to 1,$$ where $\widehat{\mathrm{Aut}}(Y)$ is a subgroup of $\mathrm{Aut}(X)$ and $G$ is a finite group.

By Lemma \ref{s_main} the group $\mathrm{Aut}(X)$ is Jordan. So its subgroup $\widehat{\mathrm{Aut}}(Y)$ is also Jordan, and so its quotient by a finite subgroup is Jordan. So we proved that $\mathrm{Aut}(Y)$ is Jordan.
\end{proof}

{\scshape National Research University
\enquote{Higher School of Economics}. 
Faculty of mathematics.} \\[-5pt]

{E-mail: AnnSavelyeva57@gmail.com}
\end{document}